\newtheorem{theorem}{Theorem}[section]
\newtheorem{proposition}[theorem]{Proposition}
\newtheorem{corollary}[theorem]{Corollary}
\begin{document}
\baselineskip=15pt

\title[Symplectic form on hyperpolygon spaces]{Symplectic form
on hyperpolygon spaces}

\author[I. Biswas]{Indranil Biswas}

\address{School of Mathematics, Tata Institute of Fundamental
Research, Homi Bhabha Road, Bombay 400005, India}

\email{indranil@math.tifr.res.in}

\author[C. Florentino]{Carlos Florentino}

\address{Departamento Matem\'atica, Centro de An\'alise Matem\'atica,
Geometria e Sistemas din\^amicos -- LARSYS, Instituto Superior
T\'ecnico, Av. Rovisco Pais, 1049-001 Lisbon, Portugal}
\email{cfloren@math.ist.utl.pt}

\author[L. Godinho]{Leonor Godinho}

\address{Departamento Matem\'atica, Centro de An\'alise Matem\'atica,
Geometria e Sistemas din\^amicos -- LARSYS, Instituto Superior
T\'ecnico, Av. Rovisco Pais, 1049-001 Lisbon, Portugal}

\email{lgodin@math.ist.utl.pt}

\author[A. Mandini]{Alessia Mandini}

\address{Departamento Matem\'atica, Centro de An\'alise Matem\'atica,
Geometria e Sistemas din\^amicos -- LARSYS, Instituto Superior
T\'ecnico, Av. Rovisco Pais, 1049-001 Lisbon, Portugal}

\email{amandini@math.ist.utl.pt}

\subjclass[2000]{14D20, 14H60}

\keywords{Hyperpolygon space, parabolic Higgs bundles, symplectic form, 
Liouville form}

\date{}

\begin{abstract}
In \cite{GM}, a family of parabolic Higgs bundles on ${\mathbb C}{\mathbb P}^1$
has been constructed and identified with a moduli space of hyperpolygons. Our aim here is
to give a canonical  alternative construction of this family. This enables us
to compute the Higgs symplectic form for this family and show that the isomorphism
of \cite{GM} is a symplectomorphism.
\end{abstract}

\maketitle

\section{Introduction}
Hyperpolygon spaces, $X(\alpha)$, with 
$\alpha \in (\mathbb{R}_{\geq 0})^n$, were introduced by Konno in \cite{K} as the
hyper-K\"{a}hler
analogue of polygon spaces. They are defined as the hyper-K\"{a}hler quotients of the cotangent bundle
$T^* \mathbb{C}^{2n}$ by the group
\begin{equation}\label{eq:group}
K \,:= \,\Big({\rm U}(2) \times {\rm U}
(1)^n\Big)/{\rm U}(1)\,=\, \Big({\rm SU}(2) \times 
{\rm U}(1)^n\Big)/ (\mathbb{Z}/2\mathbb{Z})\, ;
\end{equation}
in the definition of $K$,
the nontrivial element of $\mathbb{Z}/2\mathbb{Z}$ acts diagonally as
multiplication by $-1$ on each factor (see \cite{K} for details).
It is shown in \cite{GM} there exists an isomorphism between $X(\alpha)$
and a family of parabolic Higgs bundles over ${\mathbb C}{\mathbb P}^1$
that depends on $\alpha$. More precisely, $X(\alpha)$ is isomorphic
to the family of parabolic Higgs bundles $(E,\Phi)$ on $(\mathbb{C}
\mathbb{P}^1\, , \{x_i\}_{i=1}^n)$, where $x_i$ are fixed marked points,
$E$ is a holomorphically trivial vector bundle over $\mathbb{C} 
\mathbb{P}^1$ of rank two with a weighted complete flag structure over each of
the marked points 
\begin{align*}
E_{x_i,1}\,\supsetneq\, E_{x_i,2} \,\supsetneq\, 0\, , \\
0 \,\leq\, \beta_1(x_i) \,< \,\beta_2(x_i) \,<\,1\, ,
\end{align*}
with $\beta_2(x_i)-\beta_1(x_i)\,=\,\alpha_i$. 

Here we give an alternative canonical construction of these families of parabolic Higgs
bundles. This construction enables us to prove that the pullback, by the above
isomorphism, of the Higgs symplectic form for each family to the corresponding hyperpolygon
space $X(\alpha)$ coincides with the symplectic form on $X(\alpha)$ obtained by reduction from the Liouville symplectic form on
$T^* \mathbb{C}^{2n}$. As a consequence, the isomorphism constructed in \cite{GM} is
a symplectomorphism.   

\section{A family of parabolic Higgs bundles on
${\mathbb C}{\mathbb P}^1$}\label{sec2}

Fix $n$ distinct ordered points $D\,:=\,\{x_1\, ,\cdots \, , x_n\} \, \subset\,
{\mathbb C}{\mathbb P}^1$, with $n\, \geq\, 3$. For each
$i\, \in\, [1\, ,n]$,
fix a real number $\alpha_i\,\in\, (0\, ,1)$.
Let ${\mathcal M}^H_P$ be the moduli stack of parabolic Higgs
bundles over ${\mathbb C}{\mathbb P}^1$ of rank two and degree
zero with parabolic divisor $D$ and parabolic weights
$\{\alpha_i\, ,0\}$ at $x_i$, for  $i\, \in\, [1\, ,n]$.

Let
$$
U\, :=\, ({\mathbb C}^2\setminus \{0\})^{n}
\, \subset\, ({\mathbb C}^2)^{\oplus n}
$$
be the Zariski open subset. The algebraic cotangent bundle
$T^*U$ is the trivial vector bundle over $U$ with fiber
$(({\mathbb C}^2)^{\oplus n})^*$. So the total space of $T^*U$
is identified with the Cartesian product $(({\mathbb C}^2)^{\oplus n})^*\times U$.
For each $i\, \in\, [1\, ,n]$, let
\begin{equation}\label{e0}
\overline{f}_i\, : \,U\, \longrightarrow\, {\mathbb C}^2
\setminus \{0\}~\,~\text{and}~\,~
\overline{g}_i\, : \,(({\mathbb C}^2)^{\oplus n})^*\,
\longrightarrow\, ({\mathbb C}^2)^*
\end{equation}
be the projections to the $i$--th factor of the Cartesian products.
For $v\, \in\, {\mathbb C}^2$ and $w\, \in\,
({\mathbb C}^2)^*$, we have $v\otimes w\, \in\,
{\mathbb C}^2\otimes ({\mathbb C}^2)^*\, =\,
{\rm End}_{\mathbb C}({\mathbb C}^2)$, and $w(v)\, \in\,
{\mathbb C}$. Define the Zariski closed subscheme of
$(({\mathbb C}^2)^{\oplus n})^*\times U$
\begin{equation}\label{e-1}
{\mathcal Z}\, :=\, \left\{ (y,z) \, \in\, (({\mathbb C}^2)^{\oplus n})^*\times U\,
\mid\  \sum_{i=1}^n \overline{f}_i(z)\otimes
\overline{g}_i(y)\, =\, 0~\,\text{~and~}\,~
\overline{g}_i(y)\left(\overline{f}_i(z)\right)\, =\, 0\, \, \forall
\,\, i\right\}\, .
\end{equation}
(Note that $\overline{f}_i(z)\otimes\overline{g}_i(y)\,\in\,
{\rm End}_{\mathbb C}({\mathbb C}^2)$.)

The section of the trivial rank two vector bundle over ${\mathbb C}^2\setminus \{0\}$
$${\mathbb C}^2\times ({\mathbb C}^2\setminus \{0\})\,\longrightarrow
\, {\mathbb C}^2\setminus \{0\}$$ defined by $v\, \longmapsto\,
(v\, ,v)$ will be denoted by $s_0$. Let
\begin{equation}\label{e1}
L_0\, \subset \, {\mathbb C}^2\times ({\mathbb C}^2\setminus \{0\})
\,\longrightarrow\, {\mathbb C}^2\setminus \{0\}
\end{equation}
be the line subbundle generated by the section $s_0$.
Let $f_i$ be the composition
\begin{equation}\label{e2}
{\mathcal Z}\, \longrightarrow\, U \,
\stackrel{\overline{f}_i}{\longrightarrow}\,
{\mathbb C}^2\setminus \{0\}\, ,
\end{equation}
where $\overline{f}_i$ and $\mathcal Z$ are constructed in \eqref{e0}
and \eqref{e-1} respectively, and the map
${\mathcal Z}\, \longrightarrow\, U$ is the natural projection.

We will construct a morphism from $\mathcal Z$ to the moduli stack
${\mathcal M}^H_P$. This amounts to constructing a parabolic
Higgs vector bundle over ${\mathbb C}{\mathbb P}^1\times
\mathcal Z$ of the given type.

The vector bundle underlying the parabolic bundle will be the
trivial vector bundle of rank two
\begin{equation}\label{cv}
{\mathcal V}\, :=\,
{\mathbb C}^2\times({\mathbb C}{\mathbb P}^1\times {\mathcal Z})
\, \longrightarrow\, {\mathbb C}{\mathbb P}^1\times {\mathcal Z}
\end{equation}
over ${\mathbb C}{\mathbb P}^1\times {\mathcal Z}$. For
each point $x_i\, \in\, D$, we have the line subbundle over ${\mathcal Z}$
\begin{equation}\label{e3}
{\mathcal L}_i\, :=\, f^*_i L_0\, \subset\, 
{\mathcal V}\vert_{\{x_i\}\times {\mathcal Z}}\, =\,
{\mathbb C}^2\times {\mathcal Z}\, ,
\end{equation}
where $f_i$ and $L_0$ are constructed in \eqref{e2} and
\eqref{e1} respectively. The quasiparabolic filtration over
$\{x_i\}\times {\mathcal Z}$ is given by the line subbundle
${\mathcal L}_i$. The parabolic weight of ${\mathcal L}_i$
is $\alpha_i$ and the parabolic weight of 
${\mathcal V}\vert_{\{x_i\}\times {\mathcal Z}}$ is $0$.

Let us denote the holomorphic cotangent bundle of ${\mathbb C}{\mathbb P}^1$
by $K_{{\mathbb C}{\mathbb P}^1}$. Consider the short exact sequence
of coherent sheaves
$$
0\, \longrightarrow\, K_{{\mathbb C}{\mathbb P}^1}
\, \longrightarrow\,K_{{\mathbb C}{\mathbb P}^1}\otimes{\mathcal
O}_{{\mathbb C}{\mathbb P}^1}(D) \, \longrightarrow\,
\left(K_{{\mathbb C}{\mathbb P}^1}\otimes{\mathcal
O}_{{\mathbb C}{\mathbb P}^1}(D)\right)\vert_D\, \cong \,
{\mathcal O}_D \, \longrightarrow\, 0
$$
over ${\mathbb C}{\mathbb P}^1$; the above identification
of $(K_{{\mathbb C}{\mathbb P}^1}\otimes{\mathcal
O}_{{\mathbb C}{\mathbb P}^1}(D))\vert_D$ with ${\mathcal O}_D$
is given by the Poincar\'e adjunction formula \cite[p. 146]{GH}. Tensoring this exact
sequence with ${\rm End}_{\mathbb C}({\mathbb C}^2)$, and then
taking the corresponding long exact sequence of cohomologies, we get
\begin{equation}\label{ef1}
0\, \longrightarrow\, H^0({\mathbb C}{\mathbb P}^1,\,
{\rm End}_{\mathbb C}({\mathbb C}^2)\otimes
K_{{\mathbb C}{\mathbb P}^1}\otimes{\mathcal
O}_{{\mathbb C}{\mathbb P}^1}(D))\, \longrightarrow\,
{\rm End}_{\mathbb C}({\mathbb C}^2)\otimes H^0(D,\, {\mathcal O}_D)
\end{equation}
$$
\, =\, {\rm End}_{\mathbb C}({\mathbb C}^2)^{\oplus n}
\, \stackrel{\phi}{\longrightarrow}\, H^1({\mathbb C}{\mathbb P}^1,
\, {\rm End}_{\mathbb C}({\mathbb C}^2)\otimes
K_{{\mathbb C}{\mathbb P}^1})
$$
because $H^0({\mathbb C}{\mathbb P}^1, K_{{\mathbb C}{\mathbb P}^1})
\, =\, 0$. Using Serre duality, we have
$$H^1({\mathbb C}{\mathbb P}^1,\, K_{{\mathbb C}{\mathbb P}^1})\,=\,
H^0({\mathbb C}{\mathbb P}^1,\, {\mathcal O}_{{\mathbb C}{\mathbb P}^1})^*
\,=\,\mathbb C\, .$$ Therefore,
$$
H^1({\mathbb C}{\mathbb P}^1,\,
{\rm End}_{\mathbb C}({\mathbb C}^2)\otimes
K_{{\mathbb C}{\mathbb P}^1})\,=\,
{\rm End}_{\mathbb C}({\mathbb C}^2)\otimes
H^1({\mathbb C}{\mathbb P}^1,\, K_{{\mathbb C}{\mathbb P}^1})\,=\,
{\rm End}_{\mathbb C}({\mathbb C}^2)\, .
$$
The homomorphism
$$
\phi\, :\, {\rm End}_{\mathbb C}({\mathbb C}^2)^{\oplus n}\,
\longrightarrow\, {\rm End}_{\mathbb C}({\mathbb C}^2)
$$
in the exact sequence \eqref{ef1} coincides with the one defined
by 
$$(A_1\, ,\cdots \, ,A_n)\, \longmapsto\, \sum_{i=1}^n A_i.$$
Now we will generalize these over a family. 

Let $p\,:\, {\mathbb C}{\mathbb P}^1\times
{\mathcal Z} \, \longrightarrow\, {\mathbb C}{\mathbb P}^1$
be the natural projection. Let $\mathcal{A}:={\mathbb C}[{\mathcal Z}]$
be the $\mathbb C$-algebra defined by the algebraic functions on the
scheme ${\mathcal Z}$. For notational convenience,
the vector bundle $\text{End}({\mathcal V})\,\longrightarrow\,
{\mathbb C}{\mathbb P}^1$, where $\mathcal V$ is
defined in \eqref{cv}, will be denoted by
$\widetilde{\mathcal V}$. Consider the short exact sequence
of coherent sheaves
\begin{equation}\label{e7}
0\, \longrightarrow\, \widetilde{\mathcal V}\otimes
p^*K_{{\mathbb C}{\mathbb P}^1}
\, \longrightarrow\, \widetilde{\mathcal V}\otimes p^*(K_{{\mathbb C}
{\mathbb P}^1}\otimes{\mathcal
O}_{{\mathbb C}{\mathbb P}^1}(D)) \, \longrightarrow\,
\widetilde{\mathcal V}\vert_{D\times \mathcal Z} \, \longrightarrow\, 0
\end{equation}
over ${\mathbb C}{\mathbb P}^1\times \mathcal Z$, where the line bundle
$\left(p^*(K_{{\mathbb C} {\mathbb P}^1}\otimes{\mathcal
O}_{{\mathbb C}{\mathbb P}^1}(D))\right)\vert_{D\times \mathcal Z}$ is identified
with ${\mathcal O}_{D\times \mathcal Z}$ using the Poincar\'e
adjunction formula (as done before). We have
$$
H^0({\mathbb C}{\mathbb P}^1\times{\mathcal Z},
\, \widetilde{\mathcal V}\otimes
p^*K_{{\mathbb C}{\mathbb P}^1})\, =\, 0 ~\,\text{~and~}\,~
H^1({\mathbb C}{\mathbb P}^1\times {\mathcal Z},
\, \widetilde{\mathcal V}\otimes
p^*K_{{\mathbb C}{\mathbb P}^1})\, =\, \text{End}({\mathbb C}^2)
\otimes_{\mathbb C}{\mathcal A}
$$
because $H^1({\mathcal Z},\, {\mathcal O}_{\mathcal Z}) \, =\, 0$
(recall that $\mathcal Z$ is a Zariski open subset of an affine
variety). Also,
$$
H^0({\mathbb C}{\mathbb P}^1\times {\mathcal Z},\, 
\widetilde{\mathcal V}\vert_{D\times \mathcal Z})\, =\,
\text{End}({\mathbb C}^2)\otimes(\oplus_{i=1}^n {\mathcal A})\, =\,
(\text{End}({\mathbb C}^2)\otimes_{\mathbb C}{\mathcal A})^{\oplus n}\, .
$$
Therefore, the long exact sequence of cohomologies associated to the
exact sequence in \eqref{e7} gives
$$
0\, \longrightarrow\, H^0\left({\mathbb C}{\mathbb P}^1\times {\mathcal Z},
\, \widetilde{\mathcal V}\otimes p^*(K_{{\mathbb C}
{\mathbb P}^1}\otimes{\mathcal
O}_{{\mathbb C}{\mathbb P}^1}(D))\right)\, \longrightarrow\,
(\text{End}({\mathbb C}^2)\otimes{\mathcal A})^{\oplus n}
\, \stackrel{\overline\phi}{\longrightarrow}\,
\text{End}({\mathbb C}^2)\otimes{\mathcal A}\, ,
$$
where the above homomorphism $\overline\phi$ sends any
$(A_1\, ,\cdots \, ,A_n) \, \in\, (\text{End}({\mathbb C}^2)\otimes
{\mathcal A})^{\oplus n}$ to $\sum_{i=1}^n A_i$.

Consequently, from the condition $\sum_{i=1}^n \overline{f}_i(z)\otimes
\overline{g}_i(y)\, =\, 0$ in \eqref{e-1} it follows that
there is a unique algebraic section
$$
\theta_0 \, \in\, H^0\left({\mathbb C}{\mathbb P}^1\times {\mathcal Z},\,
\text{End}({\mathcal V})\otimes p^*(K_{{\mathbb C}{\mathbb P}^1}\otimes
{\mathcal O}_{{\mathbb C}{\mathbb P}^1}(D))\right)
$$
such that for any $(y,z)\,\in\, {\mathcal Z}$ and $x_i\,\in\, D$,
$$
\theta_0(x_i\, ,y,z)\, =\, \overline{f}_i(z)\otimes
\overline{g}_i(y)\, \in\,{\rm End}_{\mathbb C}({\mathbb C}^2)\, .
$$
{}From the condition $\overline{g}_i(y)\left(\overline{f}_i(z)\right)\, =\, 0$
in \eqref{e-1} it follows immediately that $\theta_0(x_i\,,y ,z)$
is nilpotent with respect to the quasiparabolic filtration
(constructed in \eqref{e3}). Therefore, $\theta_0$ defines a Higgs
field for the family of parabolic vector bundles. In other words,
$$
({\mathcal V}\, ,\{{\mathcal L}_i\}_{i=1}^n\, ,\theta_0)
$$
is a family of parabolic Higgs bundles on  
${\mathbb C}{\mathbb P}^1$ parametrized by $\mathcal Z$.
Hence we get a morphism
\begin{equation}\label{eq:morph}
\varphi_0\, :\, {\mathcal Z}\, \longrightarrow\,{\mathcal M}^H_P\, .
\end{equation}

Let ${\mathcal M}^{H,S}_P\, \subset\, {\mathcal M}^H_P$ be the
moduli stack of stable parabolic Higgs bundles.
We choose $\{\alpha_i\}_{i=1}^n$ in such a
way that the image of $\varphi_0$ constructed
in \eqref{eq:morph} intersects the stable locus ${\mathcal M}^{H,S}_P$.
Then from the openness of the stability condition, \cite{Ma},
it follows that there is a nonempty Zariski open subset
\begin{equation}\label{cus}
{\mathcal U}_S\, \subset\, \mathcal Z
\end{equation}
such that image of $\varphi_0\vert_{{\mathcal U}_S}$ is in ${\mathcal M}^{H,S}_P$,
in other words,
\begin{equation}\label{e5}
\varphi_0\vert_{{\mathcal U}_S}\, :\,
{\mathcal U}_S\, \longrightarrow\,
{\mathcal M}^{H,S}_P\, \subset\, {\mathcal M}^H_P\, .
\end{equation}

Let $M^H_P$ be the moduli space of stable parabolic Higgs bundles
over ${\mathbb C}{\mathbb P}^1$ of rank two and degree
zero with parabolic divisor $D$ and parabolic weights
$\{\alpha_i\, ,0\}$ at $x_i$, $i\, \in\, [1\, ,n]$. Consider
the morphism ${\mathcal M}^{H,S}_P\, \longrightarrow\, M^H_P$
to the coarse moduli space. Let
\begin{equation}\label{e6}
\varphi\, :\, {\mathcal U}_S\, \longrightarrow\, M^H_P
\end{equation}
be its composition with the morphism in \eqref{e5}.

Let $G$ be the complexification $K_{\mathbb{C}}\,=\, \Big({\rm SL}(2,\mathbb{C}) \times 
(\mathbb{C}^*)^n\Big)/ (\mathbb{Z}/2\mathbb{Z})\, ,$ of $K$ defined in \eqref{eq:group}. Then $G$ acts on  $(({\mathbb C}^2)^{\oplus n})^*\times U$ by
\begin{align}\label{eq:action}
[A,\lambda_1,\cdots, \lambda_n]\cdot ((y_1,\cdots,y_n), & (z_1,\cdots,z_n))
\\ \nonumber & =  ((\lambda_1^{-1} y_1 A, \cdots, \lambda_n^{-1} y_n A),(A^{-1 } z_1 \lambda_1, \cdots,A^{-1} z_n\lambda_n)),
\end{align}
keeping the space $\mathcal{Z}$ invariant.

There is a notion of $\alpha$-stability for hyperpolygons coming from hyper-K\"ahler quiver varieties \cite{N,K}. In \cite{GM} it is shown 
 that  $X(\alpha)$, obtained as a  GIT quotient using this $\alpha$-stability, coincides with the GIT quotient, $\mathcal{U}_S/ \!\! / G$, of  $\mathcal{U}_S$ by $G$. Indeed, the space $\mathcal{Z}$ is precisely the $0$-level set of the complex moment map for the hyper-K\"ahler action of $G$ on $(({\mathbb C}^2)^{\oplus n})^*\times U$ as considered in \cite{K,GM} and the open dense subset $\mathcal{U}_S$ coincides with the set of $\alpha$-stable elements in this level set (see  \cite[Theorem 3.1]{GM}). This also implies that  the morphism $\varphi$ is $G$-invariant.
 
  Let $\mathcal{H}(\alpha)\subset M^H_P$ be the image of this morphism. Note that 
$\mathcal{H}(\alpha)$ is the space of parabolic Higgs bundles in $M^H_P$ whose underlying vector bundle is holomorphically trivial. We  have  shown the following result.

\begin{proposition}\label{prop:GM}
The morphism $\varphi$ in \eqref{e6} is $G$-invariant, inducing the isomorphism 
$$
\overline{\varphi}\,:\,\mathcal{U}_S/ \!\! / G=X(\alpha)\,\longrightarrow
\, \mathcal{H}(\alpha)
$$
constructed in \cite[Theorem 3.1]{GM}.
\end{proposition}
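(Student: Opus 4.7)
The plan is first to check $G$-invariance of $\varphi$ by exhibiting, for each group element $g\in G$, a canonical isomorphism between the parabolic Higgs bundles associated with $(y,z)$ and $g\cdot(y,z)$, and then to use the universal property of the GIT quotient together with the identification $\mathcal{U}_S/\!\!/G=X(\alpha)$ already recalled from \cite{GM} to obtain $\overline{\varphi}$ and match it with the isomorphism of \cite[Theorem 3.1]{GM}.

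For the invariance step, I would fix $g=[A,\lambda_1,\ldots,\lambda_n]\in G$ and $(y,z)\in\mathcal{Z}$, and write $(y',z')=g\cdot(y,z)$, so that $z_i'=A^{-1}z_i\lambda_i$ and $y_i'=\lambda_i^{-1}y_i A$. Three pieces of data must be tracked through the construction of Section~\ref{sec2}: the quasiparabolic line $\mathcal{L}_i$, the parabolic weights, and the Higgs section $\theta_0$. Because $\mathcal{L}_i$ depends only on the projective class of $z_i$, the scalars $\lambda_i$ are irrelevant, while fibrewise multiplication by $A\in\mathrm{SL}(2,\mathbb{C})$ sends the line $\mathbb{C}\cdot z_i'$ onto $\mathbb{C}\cdot z_i$. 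The weights are intrinsic to the marked points and are preserved. The residue $z_i'\otimes y_i'$ equals $A^{-1}(z_i\otimes y_i)A$, since the $\lambda_i$-factors cancel, so conjugation by the global automorphism of $\mathcal{V}$ induced by $A$ identifies the canonical sections $\theta_0$ and $\theta_0'$ produced by the cohomological construction; uniqueness of the lift obtained from \eqref{e7} promotes this local matching to a global one. The diagonal $\mathbb{Z}/2\mathbb{Z}$ acts trivially on the whole package.

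Given $G$-invariance, the universal property of the GIT quotient factors $\varphi$ through $\mathcal{U}_S/\!\!/G=X(\alpha)$, producing $\overline{\varphi}:X(\alpha)\to M^H_P$; its image lies in $\mathcal{H}(\alpha)$ by construction, since the underlying bundle $\mathcal{V}$ is holomorphically trivial. To identify $\overline{\varphi}$ with the isomorphism of \cite[Theorem 3.1]{GM}, I would observe that both maps assign to the class of $(y,z)$ the isomorphism class of the parabolic Higgs bundle on $\mathbb{CP}^1$ whose quasiparabolic line at $x_i$ is $\mathbb{C}\cdot z_i$, whose weights at $x_i$ are $\{\alpha_i,0\}$, and whose Higgs residue at $x_i$ is the rank-one endomorphism $z_i\otimes y_i$ — exactly the data used in \cite{GM}.

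The main obstacle, and hence where I would concentrate care, is the matching of residues: one has to verify that the Poincaré adjunction identification used in deriving \eqref{ef1} and \eqref{e7} reproduces the residue of $\theta_0$ at $x_i$ as $z_i\otimes y_i$ on the nose, and not merely up to a nonzero scalar depending on local trivializations of $K_{\mathbb{CP}^1}\otimes \mathcal{O}_{\mathbb{CP}^1}(D)$ near each $x_i$. Any such scaling would still yield an isomorphism onto $\mathcal{H}(\alpha)$, but not the precise isomorphism of \cite[Theorem 3.1]{GM}, so the normalization check is the only non-formal point.
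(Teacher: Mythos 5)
Your argument is correct, and it is in fact more self-contained than what the paper offers. The paper's ``proof'' of Proposition~\ref{prop:GM} is essentially the paragraph preceding it: it identifies $\mathcal{Z}$ with the zero level set of the complex moment map for the $G$-action and $\mathcal{U}_S$ with the $\alpha$-stable locus, both by appeal to \cite[Theorem 3.1]{GM}, and then simply asserts that this identification yields the $G$-invariance of $\varphi$ and that the induced map is the isomorphism of \cite{GM}. You instead verify $G$-invariance directly at the level of the constructed family: the computation $z_i'\otimes y_i'=A^{-1}(z_i\otimes y_i)A$ (the $\lambda_i$ cancelling), together with $A\cdot(\mathbb{C}\cdot z_i')=\mathbb{C}\cdot z_i$ and the preservation of the weights, shows that the constant automorphism of $\mathcal{V}$ given by $A$ is an isomorphism of parabolic Higgs bundles between the members of the family over $(y,z)$ and over $g\cdot(y,z)$, and the uniqueness of the lift through \eqref{e7} promotes the matching of residues to a matching of Higgs fields. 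This buys an argument that does not presuppose agreement between the family of Section~\ref{sec2} and the construction of \cite{GM}; the paper's route is shorter but leans entirely on that agreement. Your closing caveat about the normalization of the residue is a fair one, but it is resolved by the construction itself: $\theta_0$ is \emph{defined} as the unique section whose restriction to $\{x_i\}\times\mathcal{Z}$ equals $\overline{f}_i(z)\otimes\overline{g}_i(y)$ under the fixed Poincar\'e adjunction trivialization, so the residue is $z_i\otimes y_i$ on the nose by fiat; the remaining issue of whether \cite{GM} uses the same adjunction convention is one the paper also passes over in silence, so you are not missing anything the paper supplies.
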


\section{Symplectic structure on the moduli of parabolic
Higgs bundles}

\subsection{A natural $1$--form}

The moduli space $M^H_P$ has a natural algebraic $1$--form \cite{Hi},
\cite{BR}; we will recall its construction.

Take a stable parabolic Higgs bundle
\begin{equation}\label{epb}
{\bf E}\,:=\,(E\, ,\{\ell_i\}_{i=1}^n\, , \theta) \,\in\, M^H_P\, 
\end{equation}
over $\mathbb{C}\mathbb{P}^1$, where $\ell_i$ is a line in $E_{x_i}$
giving the quasiparabolic filtration over $x_i\,\in\, D$. Let
$$
\text{End}_P(E)\, \subset\, \text{End}(E) \, :=\, E\otimes E^*
$$
be the subsheaf given by locally defined endomorphisms
$s$ such that $s(x_i)(\ell_i) \, \subset\, \ell_i$ for all
$x_i$ in the domain of definition of $s$. Let
$$
\text{End}^0_P(E)\, \subset\, \text{End}_P(E)
$$
be the subsheaf given by the locally defined endomorphisms $s$ such that 
$s(x_i)(\ell_i) \,=\, 0$ and $s(x_i)(E_{x_i}) \, \subset\,\ell_i$ for all $x_i$ 
in the domain of definition of $s$. So both $\text{End}_P(E)\vert_{\mathbb{C}\mathbb{P}^1\setminus D}$ and 
$\text{End}^0_P(E)\vert_{\mathbb{C}\mathbb{P}^1\setminus D}$ are identified with $\text{End}(E)\vert_{\mathbb{C}\mathbb{P}^1\setminus D}$. For holomorphic sections $s$ and $t$ of $\text{End}_P(E)$ and $\text{End}^0_P(E)
\otimes {\mathcal O}_{{\mathbb C}{\mathbb P}^1}(D)$ respectively, both
defined over an open subset $U_0$ of ${\mathbb C}{\mathbb P}^1$, the 
composition $s\circ t$ is a holomorphic section of $\text{End}(E)\otimes
{\mathcal O}_{{\mathbb C}{\mathbb P}^1}(D)$ over $U_0$ which is nilpotent over
the points of $D\cap U_0$;
here ``$\circ$'' is the composition of endomorphisms of $E$. Therefore,
$\text{trace}(s\circ t)$ is a holomorphic function on $U_0$. The pairing
$$
\text{End}_P(E)\otimes \left(\text{End}^0_P(E)\otimes {\mathcal O}_{{\mathbb C}
{\mathbb P}^1}(D)\right)\, \longrightarrow \, {\mathcal O}_{
{\mathbb C}{\mathbb P}^1}
$$
defined by $s\otimes t \, \longmapsto\, \text{trace}(s\circ t)$
is nondegenerate. Hence we get an isomorphism
\begin{equation}\label{e9}
\text{End}^0_P(E)\otimes {\mathcal O}_{{\mathbb C}
{\mathbb P}^1}(D)\, \stackrel{\sim}{\longrightarrow}\, \text{End}_P(E)^*\, .
\end{equation}

For any locally defined section $s$ of $\text{End}_P(E)$, note that
$s\circ\theta -\theta\circ s$ is a locally defined section of
$\text{End}^0_P(E)\otimes K_{{\mathbb C}{\mathbb P}^1}\otimes
{\mathcal O}_{{\mathbb C}{\mathbb P}^1}(D)$.
Consider the two-term complex
$$
{\mathcal C}^{\bullet}\, :\, {\mathcal C}^0\, :=\, \text{End}_P(E)\,
\stackrel{[\cdot ,\theta]}{\longrightarrow}\, {\mathcal C}^1\, :=
\,\text{End}^0_P(E)\otimes K_{{\mathbb C}{\mathbb P}^1}\otimes
{\mathcal O}_{{\mathbb C}{\mathbb P}^1}(D)\, .
$$
The tangent space of $M^H_P$ at ${\bf E}$, defined in \eqref{epb} has the
following description in terms of hypercohomology:
\begin{equation}\label{e10}
T_{{\bf E}}M^H_P\, =\, {\mathbb H}^1({\mathcal C}^{\bullet})
\end{equation}
(see \cite[Section 6]{BR}).

Consider the short exact sequence of complexes
$$
\begin{matrix}
0&& 0\\
\Big\downarrow && \Big\downarrow\\
0 & \longrightarrow & \text{End}^0_P(E)\otimes K_{{\mathbb C}{\mathbb P}^1}
\otimes{\mathcal O}_{{\mathbb C}{\mathbb P}^1}(D)\\
\Big\downarrow && \Big\downarrow\\
\text{End}_P(E) & \stackrel{[\cdot ,\theta]}{\longrightarrow} &
\,\text{End}^0_P(E)\otimes K_{{\mathbb C}{\mathbb P}^1}\otimes
{\mathcal O}_{{\mathbb C}{\mathbb P}^1}(D)\\
\Big\downarrow && \Big\downarrow\\
\text{End}_P(E) &\longrightarrow & 0\\
\Big\downarrow && \Big\downarrow\\
0&& 0
\end{matrix}
$$
over ${\mathbb C}{\mathbb P}^1$.
It produces the following long exact sequence of hypercohomologies:
\begin{equation}\label{e11}
H^0({\mathbb C}{\mathbb P}^1,\, \text{End}^0_P(E)\otimes K_{{\mathbb C}
{\mathbb P}^1}\otimes{\mathcal O}_{{\mathbb C}{\mathbb P}^1}(D))
\,\longrightarrow\, {\mathbb H}^1({\mathcal C}^{\bullet})
\,\stackrel{\eta}{\longrightarrow}\,H^1({\mathbb C}{\mathbb P}^1,
\, \text{End}_P(E))\, .
\end{equation}

Using \eqref{e9} and Serre duality,
\begin{equation}\label{e12}
H^1({\mathbb C}{\mathbb P}^1, \, \text{End}_P(E))^*\, \cong \,
H^0\left({\mathbb C}{\mathbb P}^1,\, \text{End}^0_P(E)\otimes K_{{\mathbb C}
{\mathbb P}^1}\otimes{\mathcal O}_{{\mathbb C}{\mathbb P}^1}(D)\right)\, .
\end{equation}
Now consider the composition
$$
T_{{\bf E}}M^H_P\, =\, {\mathbb H}^1({\mathcal C}^{\bullet})
\,\stackrel{\eta}{\longrightarrow}\, H^1({\mathbb C}{\mathbb P}^1, \,
\text{End}_P(E))\,\stackrel{\theta\cdot}{\longrightarrow}\,
{\mathbb C}\, ,
$$
where $\theta\cdot v\, =\, \theta(v)$ (see \eqref{e12} for the
duality pairing), and the homomorphism $\eta$ is constructed in \eqref{e11}.
This composition defines an algebraic $1$-form
\begin{equation}\label{e14}
\lambda \, \in\, H^0(M^H_P, \, \Omega^1_{M^H_P})\, .
\end{equation}
The de Rham differential $d\lambda$ is the natural symplectic form
on $M^H_P$, which we refer to as the Higgs symplectic form  \cite{BR}.

It should be mentioned that $H^1({\mathbb C}{\mathbb P}^1, \,
\text{End}_P(E))$ parametrizes the infinitesimal deformations of
the parabolic vector bundle $(E\, ,\{\ell_i\}_{i=1}^n)$.
The homomorphism $\eta$ in \eqref{e11} is the forgetful
homomorphism that sends infinitesimal deformations of
the parabolic Higgs bundle $(E\, ,\{\ell_i\}_{i=1}^n, \theta)$
to the corresponding infinitesimal deformations of $(E\, ,
\{\ell_i\}_{i=1}^n)$ obtained by forgetting the Higgs field.

\subsection{The pullback of $\lambda$}

The total space $(({\mathbb C}^2)^{\oplus n})^*\times U$ of the cotangent
bundle of $U$ is equipped with the Liouville symplectic form. Let $\omega_0$
be the restriction to ${\mathcal U}_S$ (see \eqref{cus}) of this
Liouville symplectic form. Recall that $d\lambda$, where $\lambda$ is
constructed in \eqref{e14}, is the canonical holomorphic symplectic form on $M^H_P$.

\begin{theorem}\label{thm1}
Consider $\varphi$ constructed in \eqref{e6}. The pulled back form $\varphi^* d
\lambda\,=\, d\varphi^*\lambda$ on ${\mathcal U}_S$ coincides with $\omega_0$.
\end{theorem}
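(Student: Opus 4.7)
Writing $\lambda_0 = \sum_{i=1}^n \overline{g}_i\cdot d\overline{f}_i$ for the Liouville $1$-form on $T^*U$, so that $\omega_0 = d\lambda_0$, the plan is to establish the sharper identity $\varphi^*\lambda = \lambda_0\vert_{\mathcal{U}_S}$; the theorem then follows by applying $d$. Fix $(y,z)\in \mathcal{U}_S$ and a tangent vector $\xi = (\dot y, \dot z)\in T_{(y,z)}\mathcal{U}_S$. By the construction of $\lambda$ in \eqref{e14}, evaluating $(\varphi^*\lambda)(\xi)$ breaks into two tasks: (i) describing the forgetful image $\eta(d\varphi_0(\xi))\in H^1({\mathbb C}{\mathbb P}^1,\,\text{End}_P(E))$; and (ii) pairing this class with $\theta_0$ under the duality \eqref{e12}.

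\textit{Step $1$: the forgetful image.} Because $\mathcal{V}$ in \eqref{cv} is the \emph{constant} trivial rank-two bundle over $\mathcal{Z}$, the parabolic vector bundle $(E,\{\ell_i\})$ varies over $\mathcal{Z}$ only through the lines $\ell_i = \text{span}(\overline{f}_i(z))\subset {\mathbb C}^2 = E_{x_i}$. The short exact sequence
$$0 \longrightarrow \text{End}_P(E) \longrightarrow \text{End}(E) \longrightarrow \bigoplus_{i=1}^n \text{Hom}(\ell_i,E_{x_i}/\ell_i) \longrightarrow 0$$
on ${\mathbb C}{\mathbb P}^1$, combined with $H^1({\mathbb C}{\mathbb P}^1,\,\text{End}(E))=0$, yields a surjection from $\bigoplus_i \text{Hom}(\ell_i,E_{x_i}/\ell_i)$ onto $H^1({\mathbb C}{\mathbb P}^1,\,\text{End}_P(E))$. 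Under this surjection, $\eta(d\varphi_0(\xi))$ is represented by $\{\tau_i\}_{i=1}^n$, where $\tau_i\in\text{Hom}(\ell_i,E_{x_i}/\ell_i)$ is the map $\overline{f}_i(z)\longmapsto [\overline{f}_i(\dot z)]$, the infinitesimal displacement of $\ell_i$ in direction $\dot z$.

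\textit{Step $2$: evaluation on $\theta_0$.} Via \eqref{e9} and Serre duality, the pairing \eqref{e12} between $H^0({\mathbb C}{\mathbb P}^1,\,\text{End}^0_P(E)\otimes K_{{\mathbb C}{\mathbb P}^1}(D))$ and $H^1({\mathbb C}{\mathbb P}^1,\,\text{End}_P(E))$ evaluates a global section $\psi$ on the class $\{\tau_i\}$ as $\sum_{i=1}^n \text{trace}(\text{Res}_{x_i}\psi \circ \tau_i)$; here $\text{Res}_{x_i}\psi$ is identified with an element of $\text{Hom}(E_{x_i}/\ell_i,\ell_i)$ using the Poincar\'e adjunction (exactly as in the identification made after \eqref{ef1}) together with the fact that $\psi\in\text{End}^0_P(E)$ kills $\ell_i$. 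For $\psi=\theta_0$, the relation $\overline{g}_i(y)(\overline{f}_i(z))=0$ from \eqref{e-1} is precisely what places $\theta_0$ in $\text{End}^0_P(E)\otimes K_{{\mathbb C}{\mathbb P}^1}(D)$; its residue is $\overline{f}_i(z)\otimes \overline{g}_i(y)$, sending $[\overline{f}_i(\dot z)]\in E_{x_i}/\ell_i$ to $\overline{g}_i(y)(\overline{f}_i(\dot z))\cdot \overline{f}_i(z)\in \ell_i$, so $\text{trace}(\text{Res}_{x_i}\theta_0 \circ \tau_i) = \overline{g}_i(y)(\overline{f}_i(\dot z))$.

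\textit{Conclusion and main obstacle.} Summing over $i$ gives
$$(\varphi^*\lambda)_{(y,z)}(\xi) \,=\, \sum_{i=1}^n \overline{g}_i(y)\bigl(\overline{f}_i(\dot z)\bigr) \,=\, \lambda_0(\xi),$$
so $\varphi^*\lambda = \lambda_0\vert_{\mathcal{U}_S}$ and hence $\varphi^* d\lambda = \omega_0$. The crux is Step $2$: turning the abstract Serre-duality pairing \eqref{e12} into the explicit residue-trace formula. This is cleanest via a \v{C}ech cover of ${\mathbb C}{\mathbb P}^1$ by small disjoint disks $U_i \ni x_i$ and the complement ${\mathbb C}{\mathbb P}^1\setminus D$, representing $\{\tau_i\}$ by a $1$-cocycle on this cover (possible because $H^1({\mathbb C}{\mathbb P}^1,\,\text{End}(E))=0$) and invoking the standard residue description of Serre duality on ${\mathbb C}{\mathbb P}^1$; the nilpotent factorization of $\theta_0(x_i)$ forced by the ${\mathcal Z}$-relations is what makes the trace-residue calculation unambiguous and matches it with the Liouville form.
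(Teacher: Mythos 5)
Your proposal is correct and follows essentially the same route as the paper: you compute the forgetful image $\eta(d\varphi(\xi))$ via the connecting map of the same short exact sequence $0\to\text{End}_P(E)\to\text{End}(E)\to\bigoplus_i\text{Hom}(\ell_i,E_{x_i}/\ell_i)\to 0$, and then evaluate the Serre-duality pairing with $\theta$ locally at the $x_i$ (your residue--trace formula is exactly the paper's stated local property of Serre duality for torsion quotients), arriving at the same identity $\varphi^*\lambda=\sum_i y_i\,dz_i$.
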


\begin{proof}
Take any point
\begin{equation}\label{e15}
\underline{y}\, =\, \left((y_1\, ,\cdots\, ,y_n)\, , (z_1\, ,
\cdots\, , z_n)\right)\, \in\, {\mathcal U}_S\,\subset\, (({\mathbb
C}^2)^{\oplus n})^* \times ({\mathbb C}^2)^{\oplus n}\, =\,
({\mathbb C}^2)^{\oplus n}\times ({\mathbb C}^2)^{\oplus n}\, ,
\end{equation}
where $y_i\, \in\, ({\mathbb C}^2)^*$, $z_i\, \in\, {\mathbb C}^2$. Let
\begin{equation}\label{z}
\varphi(\underline{y})
 = (E\, ,\{\ell_i\}_{i=1}^n\, , \theta)
\end{equation}
be the parabolic Higgs bundle.

Let $d\varphi\, :\, T{\mathcal U}_S\,\longrightarrow\,
TM^H_P$ be the differential of the morphism $\varphi$, where
$T {\mathcal U}_S$ and $TM^H_P$ are holomorphic tangent bundles.
We will compute the composition
\begin{equation}\label{e16}
T_{\underline{y}}{\mathcal U}_S \,\stackrel{d\varphi}{\longrightarrow}
\, T_{\varphi(\underline{y})}M^H_P\, \cong \,{\mathbb H}^1({\mathcal C}^{\bullet})
\,\stackrel{\eta}{\longrightarrow}\,H^1({\mathbb C}{\mathbb P}^1,
\, \text{End}_P(E))
\end{equation}
(see \eqref{e11} and \eqref{e10} for $\eta$ and the isomorphism
respectively). Take any
\begin{equation}\label{e17}
\underline{v}\, =\, \left((u_1\, ,\cdots\, ,u_n)\, , (v_1\, ,
\cdots\, , v_n)\right)\, \in\, T_{\underline{y}} {\mathcal U}_S\, ,
\end{equation}
where $u_i\, \in\, ({\mathbb C}^2)^*$ and
$v_i\, \in\, {\mathbb C}^2$. We want to describe 
the image of $\underline{v}$ under the composition in \eqref{e16}.

We recall from Section \ref{sec2} that $E$ is the trivial
vector bundle ${\mathbb C}^2\times {\mathbb C}{\mathbb P}^1\,
\longrightarrow\, {\mathbb C}{\mathbb P}^1$. For each $i\,\in\,
[1\, ,n]$, let
$$
\text{End}_{\mathbb C}({\mathbb C}^2) \, \longrightarrow\,
\text{Hom}_{\mathbb C}({\mathbb C}\cdot z_i\, ,
{\mathbb C}^2/({\mathbb C}\cdot z_i))
$$
be the surjective homomorphism obtained by restricting endomorphisms of
${\mathbb C}^2$ to the line ${\mathbb C}\cdot z_i$ generated by $z_i$, and
then projecting the image of this line to ${\mathbb C}^2/({\mathbb C}\cdot
z_i)$. We will consider $\text{Hom}_{\mathbb C}({\mathbb C}\cdot z_i\, ,
{\mathbb C}^2/({\mathbb C}\cdot z_i))$ as the torsion sheaf on
${\mathbb C}{\mathbb P}^1$ supported at the point $x_i\,\in\, D$.
The kernel of the homomorphism of coherent sheaves
$$
E\, =\,
{\mathbb C}^2\times {\mathbb C}{\mathbb P}^1\, \longrightarrow\,
\bigoplus_{i=1}^n \text{Hom}_{\mathbb C}({\mathbb C}\cdot z_i\, ,
{\mathbb C}^2/({\mathbb C}\cdot z_i))
$$
coincides with $\text{End}_P(E)$, because the quasiparabolic line
$\ell_i\,\subset\, E_{x_i}\, =\, {\mathbb C}^2$ at $x_i$
coincides with ${\mathbb C}\cdot z_i$. In other words, we get
a short exact sequence of coherent sheaves
$$
0\,\longrightarrow\, \text{End}_P(E)\,\longrightarrow\, \text{End}(E)\,
\longrightarrow\, \bigoplus_{i=1}^n \text{Hom}_{\mathbb C}({\mathbb C}
\cdot z_i\, ,{\mathbb C}^2/({\mathbb C}\cdot z_i))\,\longrightarrow\, 0
$$
over ${\mathbb C}{\mathbb P}^1$. Let
\begin{equation}\label{e18}
\bigoplus_{i=1}^n \text{Hom}_{\mathbb C}({\mathbb C} \cdot z_i\, ,
{\mathbb C}^2/({\mathbb C}\cdot z_i))\,\stackrel{\xi}{\longrightarrow}
\, H^1({\mathbb C}{\mathbb P}^1,\, \text{End}_P(E))
\end{equation}
be the map obtained by the long exact sequence of cohomologies associated to this
short exact sequence.

Consider the homomorphism ${\mathbb C}
\cdot z_i\, \longrightarrow\, {\mathbb C}^2$ defined by
$w \cdot z_i\, \longmapsto\, w \cdot v_i$ (see
\eqref{e17} for $v_i$). Its composition with the the
natural projection of ${\mathbb C}^2\, \longrightarrow\, {\mathbb C}^2/
({\mathbb C}\cdot z_i)$ yields a homomorphism
$$
\overline{v}_i\, :\, {\mathbb C} \cdot z_i\,
\longrightarrow\, {\mathbb C}^2/({\mathbb C}\cdot z_i)\, .
$$

Now it is straightforward to check that
$$
\left( \eta\circ d\varphi\right) (\underline{v})\, =\,\sum_{i=1}^n \xi
(\overline{v}_i)\, \in\, H^1({\mathbb C}{\mathbb P}^1,\, \text{End}_P(E))
$$
(see \eqref{e16} and \eqref{e18} for $\eta$ and $\xi$
respectively), where $\overline{v}_i$ is constructed above.

Consider the Higgs field $\theta$ in \eqref{z}. Using the
isomorphism in \eqref{e12}, it defines a functional on
$H^1({\mathbb C}{\mathbb P}^1, \, \text{End}_P(E))$. We want
to calculate $\theta(\xi (\overline{v}_i))\,\in\,\mathbb C$.

Now we will recall a property of the Serre duality pairing.
Let $V$ be an algebraic vector bundle over an irreducible
smooth complex projective curve $X$. Fix a point $x\,\in\, X$.
Let $S$ be a subspace of the vector space
$(V\otimes {\mathcal O}_X(x))_x$. Let $\widetilde{V}$ be the vector
bundle on $X$ that fits in the short exact sequence
$$
0\,\longrightarrow\, \widetilde{V}\,\longrightarrow\, V\otimes
{\mathcal O}_X(x)\,\longrightarrow\, (V\otimes {\mathcal O}_X(x))_x/S
\,\longrightarrow\, 0\, .
$$
Therefore, we have a short exact sequence of coherent sheaves
\begin{equation}\label{ses}
0\,\longrightarrow\, V\,\longrightarrow\, \widetilde{V}
\,\longrightarrow\, S\,\longrightarrow\, 0\, .
\end{equation}
Let
$$
S \, \stackrel{\beta}{\longrightarrow}\, H^1(X, \, V)
$$
be the homomorphism in the long exact sequence of cohomologies
associated to the short exact sequence in \eqref{ses}. Then for any
$$
\gamma\,\in\, H^0(X,\, V^*\otimes K_X)\, ,
$$
and any $w\,\in\, S$, the Serre duality pairing $\gamma(\beta(w))\,\in\,
\mathbb C$ coincides with $\gamma(x)(w)$; note that since $\gamma(x)\,\in
\, (V^*\otimes K_X)_x\, =\, ((V\otimes {\mathcal O}_X(x))_x)^*$
(Poincar\'e adjunction formula), we can evaluate $\gamma(x)$ on $w$.

{}From the above property of the Serre duality pairing it
follows immediately that
$$
\theta(\xi (\overline{v}_i))\, =\, y_i(v_i)\, ;
$$
recall from \eqref{e15} and \eqref{e17} that $y_i\, \in\, ({\mathbb C}^2)^*$
and $v_i\,\in\, {\mathbb C}^2$ respectively.

Consequently, the form $\varphi^*\lambda$ on ${\mathcal U}_S$
coincides with the following $1$-form $\lambda'$
on ${\mathcal U}_S$: for any point
$\underline{y}$ as in \eqref{e15} and any tangent
vector $\underline{v}$ at $\underline{y}$ as in \eqref{e17},
$$
\lambda'(\underline{v}) \, :=\, \sum_{i=1}^n y_i(v_i)\, .
$$

It is straightforward to check that $\lambda'(\underline{v})$
is the evaluation at $\underline{v}$ of the tautological one-form
on the total space of the holomorphic cotangent bundle
$(({\mathbb C}^2)^{\oplus n})^*\times U\, \longrightarrow\, U$.
Therefore, $d\lambda'$ coincides with the restriction to ${\mathcal U}_S$
of the Liouville symplectic form on $(({\mathbb C}^2)^{\oplus n})^*\times U$.
Consequently, $d\varphi^*\lambda$ coincides with $\omega_0$.
\end{proof}

The holomorphic symplectic form $\omega_0$ on $\mathcal{U}_S$ is easily seen to be $G$-invariant (see \eqref{eq:action}). This defines a natural holomorphic symplectic form $\omega$
on the hyperpolygon space $X(\alpha)=\mathcal{U}_S/\!\! / G$ and we obtain:

\begin{corollary}
The isomorphism $\overline{\varphi}\,:\,(X(\alpha),\omega)\,\longrightarrow\,
(M^H_P,d\lambda)$ from Proposition~\ref{prop:GM} is a symplectomorphism.
\end{corollary}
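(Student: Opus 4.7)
The plan is to combine Theorem~\ref{thm1} with the standard characterization of the symplectic form on a GIT (equivalently, symplectic) quotient. Write $\pi\colon \mathcal{U}_S \to \mathcal{U}_S/\!\!/G = X(\alpha)$ for the quotient map. By Proposition~\ref{prop:GM}, the $G$-invariant morphism $\varphi$ factors as $\varphi = \overline{\varphi}\circ \pi$, so Theorem~\ref{thm1} gives
\[
\pi^*\bigl(\overline{\varphi}^* d\lambda\bigr) \,=\, \varphi^*(d\lambda) \,=\, \omega_0
\]
on $\mathcal{U}_S$.

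Next I would unwind the definition of $\omega$ on $X(\alpha)$. As recalled in Section~\ref{sec2}, $\mathcal{Z}$ is the zero-level set of the complex moment map for the $G$-action on $T^*U$ equipped with its Liouville form $\Omega$, and $\omega_0$ is the restriction of $\Omega$ to $\mathcal{U}_S\subset \mathcal{Z}$. By the standard complex symplectic reduction recipe, the reduced holomorphic symplectic form $\omega$ on $X(\alpha)$ is uniquely characterized by the relation $\pi^*\omega = \omega_0$. Combined with the previous display, both $\overline{\varphi}^* d\lambda$ and $\omega$ pull back to the same $2$-form under $\pi$.

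It remains to cancel $\pi^*$. Since $G$ acts on $\mathcal{U}_S$ with at most the finite central stabilizer $\mathbb{Z}/2\mathbb{Z}$, the projection $\pi$ is a surjective submersion onto the smooth part of the quotient, so $\pi^*$ is injective on differential $2$-forms on $X(\alpha)$. This forces $\overline{\varphi}^* d\lambda = \omega$, which is precisely the statement that $\overline{\varphi}$ is a symplectomorphism.

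The step I expect to require most care is the well-definedness of $\omega$ via reduction: one must verify that $\omega_0$ genuinely descends to a holomorphic symplectic form on $X(\alpha)$. This rests on two standard facts: first, $\omega_0$ is $G$-invariant, which is clear from the explicit formula \eqref{eq:action} together with the naturality of the Liouville form under lifted actions; second, the contraction of $\omega_0$ with any fundamental vector field of the $G$-action vanishes on $\mathcal{Z}$, which is the infinitesimal form of the moment map equation that cuts out $\mathcal{Z}$. Once this reduction step is in place, the remainder of the argument is essentially formal and is driven entirely by Theorem~\ref{thm1}.
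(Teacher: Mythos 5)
Your argument is correct and follows the same route as the paper: the paper simply observes that $\omega_0$ is $G$-invariant, defines $\omega$ on $X(\alpha)=\mathcal{U}_S/\!\!/G$ by reduction (so that $\pi^*\omega=\omega_0$), and deduces the corollary from Theorem~\ref{thm1} together with the factorization $\varphi=\overline{\varphi}\circ\pi$. You spell out the same steps in more detail, in particular the injectivity of $\pi^*$ on $2$-forms and the descent of $\omega_0$, which the paper leaves implicit.
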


\medskip
\noindent
\textbf{Acknowledgements.}\, The first author wishes to thank
Instituto Superior T\'ecnico, where the work was carried out,
for its hospitality; his visit to IST was funded by the FCT project
PTDC/MAT/099275/2008.  This work was partially supported by the FCT projects PTDC/MAT/108921/2008,
PTDC/MAT/120411/2010 and the FCT grant SFRH/BPD/44041/2008.


\end{document}